\newcommand\reallywidehat[1]{%
\savestack{\tmpbox}{\stretchto{%
  \scaleto{%
    \scalerel*[\widthof{\ensuremath{#1}}]{\kern-.6pt\bigwedge\kern-.6pt}%
    {\rule[-\textheight/2]{1ex}{\textheight}}
  }{\textheight}%
}{0.5ex}}%
\stackon[1pt]{#1}{\tmpbox}%
}
\DeclareMathOperator{\I}{\mathcal{I}}
\newcommand{\T}{\mathbb{T}}
\DeclareMathOperator{\dist}{dist}
\newcommand{\W}{\mathfrak{W}}
\newcommand{\D}{\mathfrak{D}}
\newcommand{\F}{\mathcal{F}}
\newcommand{\AF}{\mathcal{AF}}
\newcommand{\J}{\mathrm{J}}
\renewcommand{\leq}{\leqslant}
\renewcommand{\geq}{\geqslant}
\newcommand{\scalprod}[2]{\langle{#1},{#2}\rangle}
\newtheorem{Le}{Lemma}[section]
\newtheorem{Def}[Le]{Definition}
\newtheorem{Th}[Le]{Theorem}
\newtheorem{Rem}[Le]{Remark}
\numberwithin{equation}{section}
\begin{document}
\author{Dmitriy~Stolyarov\thanks{Supported by RFBR grant 18-31-00037 and by ``Native towns'', a social investment program of PJSC ``Gazprom Neft''.}}
\title{Martingale interpretation of weakly cancelling differential operators}
\maketitle
\begin{abstract}
We provide martingale analogs of weakly cancelling differential operators and prove a Sobolev-type embedding theorem for these operators in the martingale setting.
\end{abstract}

\section{Preliminaries}
In~\cite{VS}, Van Schaftingen gave a characterization of linear homogeneous vector-valued elliptic differential operators~$A$ of order~$k$ in~$d > 1$ variables such that
the inequality
\begin{equation*}
\|\nabla^{k-1} f\|_{L_{\frac{d}{d-1}}(\mathbb{R}^d)} \lesssim \|A f\|_{L_1(\mathbb{R}^d)}
\end{equation*}
holds true for any smooth compactly supported function~$f$\footnote{The notation ``$X \lesssim Y$'' (as in the inequality above) means there exists a constant~$C$ such that~$X \leq CY$ uniformly. The parameter with regard to which we apply the term ``uniformly'' is always clear from the context.}. He called such operators cancelling. Let~$k \geq d$ and let~$l \in [1\,..\,d-1]$. It was also proved in~\cite{VS} that the operator~$A$ is cancelling (assuming the ellipticity) if and only if
\begin{equation*}
\|\nabla^{k-l} f\|_{L_{\frac{d}{d-l}}} \lesssim \|A f\|_{L_1}.
\end{equation*}
However, for the case~$l=d$, the cancellation condition is only sufficient. In~\cite{Raita}, Raita found a necessary and sufficient condition on the operator~$A$ for the inequality
\begin{equation*}
\|\nabla^{k-d} f\|_{L_{\infty}} \lesssim \|A f\|_{L_1}
\end{equation*}
to be true for any~$f \in C_0^{\infty}(\mathbb{R}^d)$. He called such operators weakly cancelling operators. 

The paper~\cite{ASW} suggested a martingale interpretation of Van Schaftingen's theorem. It appears that the cancellation condition has a direct analog in a probabilistic model earlier introduced in~\cite{Janson}. The present note provides an analog of Raita's weak cancellation condition.

We refer the reader to~\cite{ASW} for more history and motivation as well as for a more detailed description of the notation. See Section~\ref{SComparison} for comparison of our results with~\cite{Raita} and~\cite{VS}. 

The author thanks Bogdan Raita for attracting his attention to the question.

\section{Notation and statement}
Let~$m \geq 2$ be a natural number, let~$\F = \{\F_n\}_n$ be an~$m$-uniform filtration on a probability space. By this we mean that each atom of the algebra~$\F_n$ is split into~$m$ atoms of~$\F_{n+1}$ having equal probability. The symbol~$\AF_n$ denotes the set of all atoms in~$\F_n$. For each~$\omega \in \AF_n$, we fix a map
\begin{equation*}
\J_\omega \colon [1\,..\,m] \to \{\omega' \in \AF_{n+1}\mid \omega' \subset \omega\}.
\end{equation*}
This fixes the tree structure on the set of all atoms. Each atom in $\AF_n$ corresponds to a sequence of~$n$ integers in the interval~$[1\,..\,m]$, which we call digits. We may go further and consider the set~$\T$ consisting of all infinite paths in the tree of atoms. Each path starts from the atom in~$\F_0$, then chooses one of its sons in~$\F_1$, then one of its sons in~$\F_2$, and so on. There is a natural one-to-one correspondence between points in~$\T$, i.e. paths, and infinite sequences of digits in~$[1\,..\,m]$. There is also a natural metric on~$\T$. The distance between the two paths~$\gamma_1$ and~$\gamma_2$ is defined by the standard formula
\begin{equation}\label{Metric}
\dist(\gamma_1,\gamma_2) = m^{-d}, \quad d = \max\{n\mid \gamma_1(j) = \gamma_2(j) \hbox{ for all } j<n\}.
\end{equation}

Define the linear space~$V$ by the rule
\begin{equation*}
V = \Big\{x\in\mathbb{R}^m\;\Big|\, \sum\limits_{j=1}^m x_j = 0\Big\}.
\end{equation*}
Let~$\ell$ be an integer. We will be considering~$\mathbb{R}^\ell$-valued martingales adapted to~$\F$. Let~$F = \{F_n\}_n$ be an~$\mathbb{R}^\ell$-valued martingale. Define its martingale difference sequence by the rule
\begin{equation*}
f_{n+1} = F_{n+1} - F_n,\quad n \geq 0.
\end{equation*}
Now fix an atom~$\omega \in \AF_n$. The map~$\J_\omega$ may be naturally extended to the map that identifies an element of~$V\otimes \mathbb{R}^\ell$ with the restriction~$f_{n+1}|_{\omega}$ of a martingale difference to~$\omega$. In other words, the map~$\J_\omega$ identifies~$V\otimes \mathbb{R}^\ell$ with the space of~$\mathbb{R}^\ell$-valued~$\F_{n+1}$-measurable functions on~$\omega$ having mean value zero. The said extension will be also denoted by~$\J_\omega$. 

\begin{Def}
Let~$W \subset V\otimes \mathbb{R}^\ell$ be a linear subspace. Define the martingale Sobolev space by the rule
\begin{equation*}
\W = \Big\{F \hbox{ is an $L_1$-martingale}\;\Big|\,\forall n\quad \forall \omega \in \AF_n \quad f_{n+1}|_\omega \in \J_{\omega}[W]\Big\}.
\end{equation*}
The norm in~$\W$ is inherited from~$L_1$.
\end{Def}
We also introduce the martingale analog of the Riesz potential:
\begin{equation*}
\I_\alpha[F] = \Big\{\sum\limits_{k=0}^{n}m^{-\alpha k}f_{k}\Big\}_{n}, \quad \alpha > 0.
\end{equation*}
\begin{Th}[Theorem $1.9$ in \cite{ASW}]\label{OldMainTheorem}
If~$W$ does not contain non-zero rank-one tensors~$v\otimes a$ with~$v$ having~$m-1$ equal coordinates\textup, then
\begin{equation}\label{Embedding}
\|\I_{\frac{p-1}{p}}[F]\|_{L_p} \lesssim \|F\|_{\W}, \quad p \in (1,\infty].
\end{equation}
\end{Th}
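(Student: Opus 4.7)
The natural strategy is to prove the endpoint $p=\infty$ (equivalently $\alpha=1$) directly from the cancellation hypothesis, and then to reach $p\in(1,\infty)$ by interpolation with the tautology at $\alpha=0$.

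The first task is to turn the algebraic hypothesis into a quantitative linear-algebra inequality. For each index $j\in[1\,..\,m]$, let $L_j\subset V\otimes\mathbb{R}^\ell$ denote the subspace of rank-one tensors $v^{(j)}\otimes a$, where $v^{(j)}\in V$ has coordinate $-(m-1)$ at position~$j$ and $1$ at the remaining positions. The hypothesis reads exactly $W\cap L_j=\{0\}$ for every $j$, and compactness on the unit sphere of $W$ yields a constant $c_W>0$ such that
\begin{equation*}
\dist(w,L_j)\ge c_W\|w\|\qquad\text{for every }w\in W\text{ and every }j\in[1\,..\,m].
\end{equation*}
This quantitative cancellation is meant to play the role of the weakly cancelling property of~\cite{Raita}.

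For the endpoint $\alpha=1$, I fix a path $\gamma\in\T$ and expand $\I_1[F](\gamma)=\sum_{k\ge 0}m^{-k}f_k(\gamma)$; here $f_k(\gamma)$ is the $d_k(\gamma)$-th coordinate of the tuple $f_k|_{\omega_{k-1}(\gamma)}\in\J_{\omega_{k-1}(\gamma)}[W]$, with $d_k(\gamma)$ denoting the $k$-th digit and $\omega_{k-1}(\gamma)\in\AF_{k-1}$ the atom containing~$\gamma$. The cancellation inequality controls this coordinate in terms of the component of $f_k|_{\omega_{k-1}(\gamma)}$ in $L_{d_k(\gamma)}^{\perp}$, and the plan is to match this component with an increment of the martingale $F$ across neighbouring levels, producing a telescoping sum that collapses to the global $L_1$-bound $\|F\|_{\W}$. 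The case $p=1$ is tautological since $\I_0=\mathrm{Id}$; the full range $p\in(1,\infty)$ then follows by interpolating the analytic family $\alpha\mapsto\I_\alpha$ between the endpoints $\alpha=0$ and $\alpha=1$.

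The main difficulty is this telescoping step. A purely pointwise application of the cancellation inequality would yield $m^{-k}|f_k(\gamma)|\lesssim m^{-k}\|f_k|_{\omega_{k-1}(\gamma)}\|_{L_\infty}$ and, after summation, only the divergent quantity $\sum_k\|f_k\|_{L_1}$ rather than the desired $\|F\|_{\W}$. The cancellation must therefore be used structurally rather than pointwise: the cancellation deficit at one level should correspond, up to a sign, to a piece of the martingale increment at an adjacent level, so that the partial sums along $\gamma$ telescope to a boundary term controlled by $\|F\|_{\W}$. This is where the weakly cancelling character of $W$ genuinely enters, and is the crux of the proof.
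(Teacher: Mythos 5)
First, a point of reference: the paper does not prove this statement at all --- it is quoted verbatim as Theorem $1.9$ of \cite{ASW} and used as a black box --- so there is no in-paper argument to compare yours against. Judged on its own terms, your opening reduction is correct and standard: the vectors of $V$ with $m-1$ equal coordinates are exactly the multiples of the $D_j$, so the hypothesis is $W\cap\D_j=\{0\}$ for every $j$, and compactness of the unit sphere of $W$ upgrades this to the quantitative transversality $\dist(w,\D_j)\ge c_W\|w\|$.

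The difficulty is that everything after that is a plan rather than a proof, and the one step you defer is the entire theorem. You correctly observe that the pointwise use of the cancellation gives only the divergent quantity $\sum_k\|f_k\|_{L_1}$, and you propose to repair this by a ``telescoping'' along a fixed path $\gamma$; but no such telescoping identity is available. The constraint $f_{k}|_\omega\in\J_\omega[W]$ ties together the values of $f_k$ on the $m$ siblings of a single atom; it imposes no relation between the components of $f_k|_{\omega_{k-1}(\gamma)}$ transversal to $\D_{d_k(\gamma)}$ and the martingale increments at adjacent levels along $\gamma$. Moreover the right-hand side $\|F\|_{\W}=\sup_n\|F_n\|_{L_1}$ is a global $L_1$-quantity that a single branch of the tree cannot see, so a path-local argument cannot terminate in it. The actual proof in \cite{ASW} goes through the stronger inequality \eqref{StrongerEmbedding} and is global: it exploits a convexity-type gain in $\|F_{n+1}\|_{L_1}$ over $\|F_n\|_{L_1}$ summed over all atoms of a generation, which is precisely the mechanism your plan is missing. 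Finally, the interpolation step is also not safe as stated: Stein interpolation of the family $\I_{\alpha+it}$ requires uniform bounds on the whole boundary lines, and on $\mathrm{Re}\,\alpha=0$ the operator $\I_{it}$ is a martingale transform with unimodular coefficients, which is not bounded on $L_1$; in addition the domain $\W$ is a proper subspace of $L_1$ cut out by linear constraints, so the intermediate spaces would need to be identified with care. Since \eqref{StrongerEmbedding} yields \eqref{Embedding} for every $p\in(1,\infty]$ by the triangle inequality, interpolation is in any case not the right tool; what is missing is the endpoint-type estimate itself.
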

\begin{Rem}
In fact\textup, a stronger inequality
\begin{equation}\label{StrongerEmbedding}
\sum\limits_{n \geq 0} m^{-\frac{p-1}{p}n}\|f_{n}\|_{L_p} \lesssim \|F\|_{\W}
\end{equation}
is true if~$W$ does not contain rank-one tensors~$v\otimes a$ with~$v$ having~$m-1$ equal coordinates \textup(see Theorem~$1.10$ in \textup{\cite{ASW}}\textup). Moreover\textup, the absence of the said vectors is also necessary for~\eqref{Embedding}.
\end{Rem}
It appears that if we put yet another martingale transform, the game becomes more interesting, at least for the endpoint case~$p=\infty$. Let~$\varphi\colon W\to V$ be a linear operator. When does the inequality
\begin{equation}\label{Raita's}
\Big\|\sum\limits_{n} m^{-n}\sum\limits_{\omega \in \AF_n}\J_\omega\Big[\varphi(\J^{-1}_\omega [f_{n+1}|_{\omega}])\Big]\Big\|_{L_{\infty}}\lesssim \|F\|_{\W}
\end{equation} 
hold true\footnote{There is a small inaccuracy in the notation here. Namely, the image of~$\J_\omega$ is formally defined as a function on~$\omega$. In the inequality above, we have extended it by zero to the remaining part of the probability space.}? By~\eqref{StrongerEmbedding} and the triangle inequality, it is true provided~$W$ does not contain rank-one tensors~$v\otimes a$ with~$v$ having~$m-1$ equal coordinates. Surprisingly,~\eqref{Raita's} may hold true in other cases. Seemingly, this effect is present for the case~$p=\infty$ only. 

Let~$D_1,D_2,\ldots, D_m$ be the ``nasty'' vectors in~$V$ that break our inequalities:
\begin{equation*}
D_j = (\underbrace{-1,-1,\ldots, -1}_{j-1}, m-1,-1,\ldots, -1).
\end{equation*}
\begin{Th}\label{MainTh}
The inequality~\eqref{Raita's} holds true if and only if 
\begin{equation}\label{WeakCancellation}
\big(\varphi(D_j\otimes a)\big)_j = 0
\end{equation}
whenever~$D_j\otimes a \in W$.
\end{Th}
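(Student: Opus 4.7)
The theorem is an iff, so I would prove necessity (counterexample when \eqref{WeakCancellation} fails) and sufficiency (bound under \eqref{WeakCancellation}) separately.

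\textbf{Necessity.} If \eqref{WeakCancellation} fails, pick $j\in[1\,..\,m]$ and $a\in\mathbb{R}^\ell$ with $D_j\otimes a\in W$ and $\alpha:=(\varphi(D_j\otimes a))_j\neq 0$. I would construct a cascading martingale $F_N$ forcing the ratio in \eqref{Raita's} to diverge. Fix a descending chain of atoms $\omega_0\supset\omega_1\supset\cdots$ with $\omega_{n+1}=\J_{\omega_n}(j)$, the ``preferred'' child at each step, and for $n<N$ set $\J_{\omega_n}^{-1}[f_{n+1}|_{\omega_n}]=m^n\cdot D_j\otimes a$, with $f_{n+1}$ vanishing on every other atom of $\AF_n$. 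A telescoping computation gives $F_N=(m^N-1)a$ on $\omega_N$ (measure $m^{-N}$) and $F_N=-a$ on $\omega_0\setminus\omega_N$, so that $\|F_N\|_{\W}=O(|a|)$ uniformly in $N$. But at every $t\in\omega_N$ the transform in \eqref{Raita's} reduces to $\sum_{n=0}^{N-1}m^{-n}\cdot m^n\alpha=N\alpha$, so its $L_\infty$ norm is at least $N|\alpha|$, and the ratio blows up.

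\textbf{Sufficiency.} Assume \eqref{WeakCancellation}. I would set $T_j=\{a\in\mathbb{R}^\ell : D_j\otimes a\in W\}$, let $W_{\mathrm{bad}}=\sum_j D_j\otimes T_j$, and fix a linear complement $W_{\mathrm{good}}$ with $W=W_{\mathrm{good}}\oplus W_{\mathrm{bad}}$; by construction $W_{\mathrm{good}}$ contains no bad rank-one tensor. On every atom decompose $x_\omega:=\J_\omega^{-1}[f_{n+1}|_\omega]=y_\omega+\sum_j D_j\otimes a^j_\omega$ with $y_\omega\in W_{\mathrm{good}}$, $a^j_\omega\in T_j$. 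For the good part, the auxiliary martingale $F^{\mathrm{good}}$ with differences $\J_\omega[y_\omega]$ sits in a Sobolev space whose test subspace has no bad rank-one tensors, so the strong form \eqref{StrongerEmbedding} of Theorem~\ref{OldMainTheorem} at $p=\infty$ yields $\sum_n m^{-n}\|f^{\mathrm{good}}_{n+1}\|_{L_\infty}\lesssim\|F^{\mathrm{good}}\|_{L_1}$. Combined with the disjoint-support identity $\|\sum_{\omega\in\AF_n}\J_\omega[\varphi(y_\omega)]\|_{L_\infty}=\max_\omega\|\varphi(y_\omega)\|_\infty$, the triangle inequality bounds $\|S^{\mathrm{good}}\|_{L_\infty}$. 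For the bad part, evaluate pointwise: at $t$ the scale-$n$ summand is $m^{-n}\sum_j(\varphi(D_j\otimes a^j_{\omega_n(t)}))_{j_{n+1}(t)}$, and \eqref{WeakCancellation} annihilates the $j=j_{n+1}(t)$ term --- precisely the ``big-jump'' coordinate of the bad tensor. The remaining $m-1$ summands are each $\lesssim|a^j_{\omega_n(t)}|$, which matches the size of the ``small-jump'' values of $f^{\mathrm{bad}}_{n+1}$, and a telescoping estimate dual to the necessity cascade should then give $\|S^{\mathrm{bad}}\|_{L_\infty}\lesssim\|F^{\mathrm{bad}}\|_{L_1}$.

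\textbf{Main obstacle.} The most delicate point will be comparing $\|F^{\mathrm{good}}\|_{L_1}$ and $\|F^{\mathrm{bad}}\|_{L_1}$ with $\|F\|_{L_1}$: the fibrewise decomposition is bounded at the level of each atomic martingale difference, but because $L_1$ has no orthogonal splitting, pointwise boundedness of differences does not upgrade automatically to boundedness for $L_1$-martingales, and the naive bound $\sum_n\|f_n\|_{L_1}$ is not controlled by $\|F\|_{L_1}$. My tentative remedy is to avoid splitting $F$ explicitly: instead, work directly with the pointwise expression for $S(t)$, interweaving the weak cancellation identity with the strong estimate from Theorem~\ref{OldMainTheorem} on a single ``good-completion'' of $F$, so that only $\|F\|_\W$ appears on the right-hand side throughout.
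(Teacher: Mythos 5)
Your necessity argument is correct and is essentially the paper's: the cascade along the constant path $j,j,j,\ldots$ has $L_1$ norm $O(|a|)$ uniformly in $N$ while the transform accumulates $N\alpha$ on $\omega_N$.

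The sufficiency half has a genuine gap, and you have put your finger on it yourself. The splitting $W=W_{\mathrm{good}}\oplus W_{\mathrm{bad}}$ decomposes each martingale difference fibrewise, but there is no reason why $\|F^{\mathrm{good}}\|_{L_1}$ or $\|F^{\mathrm{bad}}\|_{L_1}$ should be controlled by $\|F\|_{L_1}$, and Theorem~\ref{OldMainTheorem} needs $\|F^{\mathrm{good}}\|_{L_1}$ on its right-hand side, so the good-part estimate does not close. The bad-part estimate is also incomplete as stated: after \eqref{WeakCancellation} kills the $j=j_{n+1}(t)$ term, the remaining summands are bounded by $\sum_{j\ne j_{n+1}(t)}|a^j_{\omega_n(t)}|$, which is not pointwise dominated by $|f^{\mathrm{bad}}_{n+1}(t)|$ (the values $(m-1)a^i-\sum_{j\ne i}a^j$ can cancel), and for a general $L_1$-martingale $m^{-n}\|f_{n+1}\|_{L_\infty}$ can be of order $\|F\|_{L_1}$ at \emph{every} scale, so no summation over $n$ of such pointwise bounds can succeed without exploiting additional structure. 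Your closing remark that one should ``avoid splitting $F$'' is the right instinct, but it is not yet an argument.

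The paper closes the gap with two ideas absent from your proposal. First, instead of decomposing $W$, it extends each coordinate functional $\varphi_j$ from $W$ to all of $V\otimes\mathbb{R}^\ell$ so that the extension $\Phi_j$ vanishes on the whole of $\D_j=\{D_j\otimes a\mid a\in\mathbb{R}^\ell\}$; a Hahn--Banach-type lemma makes this possible precisely because \eqref{WeakCancellation} says $\varphi_j$ vanishes on $W\cap\D_j$. One then proves the a priori stronger inequality \eqref{Raita'sExtended}, with the extension $\Phi$ and with $\|F\|_{L_1}$ on the right, for \emph{all} $L_1$-martingales. Second, an $L_1$-martingale is identified with a vector measure of bounded variation on the tree boundary $\T$, and by linearity it suffices to test the inequality on delta measures $a\delta_{j}$. (This reduction is unavailable without the extension step: the differences of the delta-measure martingale are the tensors $m^nD_{j_{n+1}}\otimes a$, which need not lie in $W$.) For a delta measure the extended cancellation condition forces the scale-$n$ summand to vanish on the atom carrying all subsequent differences, so at every point at most one summand is nonzero and the trivial bound $m^{-n}\|f_{n+1}\|_{L_\infty}\lesssim|a|$ finishes the proof. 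I recommend abandoning the good/bad splitting and adopting this extension-plus-extreme-points route.
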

Formula~\eqref{WeakCancellation} means that the~$j$-th coordinate of the vector~$\varphi[D_j\otimes a] \in V$ vanishes.

\section{Proof of Theorem~\ref{MainTh}}
\subsection{Necessity}
Assume there exists~$j\in [1\,..\, m]$ and a vector~$a\in \mathbb{R}^\ell \setminus \{0\}$ such that~$D_j\otimes a \in W$ and
\begin{equation*}
\big(\varphi(D_j\otimes a)\big)_j =\theta \ne 0.
\end{equation*}
Consider the martingale~$F$ defined as follows:
\begin{equation*}
F_n = a\cdot m^n\chi_{\omega_n}, \quad \hbox{where } \omega_n \in \AF_n, \quad n \geq 0,
\end{equation*}
corresponds to the sequence~$\{\underbrace{j,j,j\ldots,j}_n\}$. Then,
\begin{equation*}
f_{n+1} = \J_\omega [D_j\otimes a]\cdot m^n\chi_{\omega_n}.
\end{equation*}
Let us stop our martingale at the step~$N$ and plug the stopped martingale into~\eqref{Raita's}. Then, the sum on the left hand-side of~\eqref{Raita's} is equal to~$N\theta$ on the atom~$\omega_N$. So, the left hand-side tends to infinity as~$N\to \infty$, whereas the right hand-side is identically equal to one. So, if~$\theta \ne 0$, the inequality~\eqref{Raita's} cannot be true.

\subsection{Sufficiency}
\begin{Le}\label{LinearAlgebra}
Let~$G$ be a real finite dimensional linear space\textup, let~$E$ and~$F$ be its subspaces. Let~$\psi$ be a linear functional on~$E$\textup, which vanishes on~$E\cap F$. There exists a linear functional~$\Psi$ on~$G$ such that~$\Psi$ is an extension of~$\psi$ and it vanishes on~$F$.
\end{Le}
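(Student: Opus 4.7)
The plan is to build $\Psi$ in two stages: first construct a candidate extension on the subspace $E + F$ of $G$, then extend it further to all of $G$ by choosing any linear extension, which is automatic in finite dimensions.

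For the first stage, the natural definition is $\tilde\Psi(e + f) = \psi(e)$ for $e \in E$ and $f \in F$. The key point I would verify is that this is well-defined: if $e + f = e' + f'$ with $e, e' \in E$ and $f, f' \in F$, then $e - e' = f' - f$ lies in $E \cap F$, so by hypothesis $\psi(e - e') = 0$, i.e., $\psi(e) = \psi(e')$. Linearity of $\tilde\Psi$ on $E + F$ then follows immediately, as does $\tilde\Psi|_E = \psi$ and $\tilde\Psi|_F = 0$. (Equivalently, one can phrase this as saying that $\psi$, being trivial on $E \cap F$, factors through the quotient $E/(E\cap F) \cong (E+F)/F$, thereby defining a functional on $E+F$ that kills $F$.)

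For the second stage, one chooses a linear complement $U$ of $E + F$ inside $G$ (which exists since $G$ is finite-dimensional) and sets $\Psi = \tilde\Psi$ on $E + F$ and $\Psi = 0$ on $U$, extending by linearity. Then $\Psi$ is a linear functional on $G$ that restricts to $\psi$ on $E$ and vanishes on $F$, as required.

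There is no real obstacle here; the only step that needs attention is the well-definedness check for $\tilde\Psi$ on $E + F$, which is exactly where the hypothesis $\psi|_{E\cap F} = 0$ is used. Everything else is routine finite-dimensional linear algebra.
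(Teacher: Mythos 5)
Your proof is correct and is essentially the paper's argument written out concretely: the paper factors $\psi$ through $E/(E\cap F)\cong (E+F)/F\hookrightarrow G/F$ and extends by Hahn--Banach, which is exactly your well-defined functional on $E+F$ killing $F$, followed by an arbitrary extension to $G$. No substantive difference.
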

\begin{proof}
Consider the diagram
\begin{equation*}
\begin{diagram}
\node{G}
\arrow{e,t}{ }
\arrow{sseee,t}{\Psi}
\node{G/F}
\arrow[2]{se,t}{(2)}
\\
\node{E}
\arrow{n,t,J}{ }
\arrow{e,t}{ }
\arrow{seee,b}{\psi}
\node{E/(E\cap F)}
\arrow{n,t,J}{ }
\arrow{see,l}{(1)}\\
\node{ }
\node{ }
\node{ }
\node{\mathbb{R}}
\end{diagram}.
\end{equation*}
The arrow~$(1)$ exists because~$\psi|_{E\cap F} = 0$. The arrow~$(2)$ is constructed from~$(1)$ with the help of the Hahn--Banach theorem. The map~$\Psi$ is then restored by commutativity of the diagram. 
\end{proof}

We want to extend~$\varphi$ to the whole space~$V\otimes \mathbb{R}^\ell$ preserving the condition~\eqref{WeakCancellation}. For that, we consider coordinate functionals~$\varphi_j\colon W\to \mathbb{R}$, who are simply~$j$-th coordinates of~$\varphi$, and try to extend them. Consider the spaces~$\D_j$ defined as
\begin{equation*}
\D_j = \Big\{D_j\otimes a\;\Big|\,a \in\mathbb{R}^\ell\Big\}.
\end{equation*}
Formula~\eqref{WeakCancellation} means that~$\varphi_j|_{W\cap\D_j} = 0$ exactly. We apply Lemma~\ref{LinearAlgebra} with~$G := V\otimes \mathbb{R}^\ell$,~$E := W$,~$F:= \D_j$, and~$\varphi_j$ in the role of~$\psi$, and obtain a functional~$\Phi_j:= \Psi$ on~$V\otimes \mathbb{R}^\ell$, which vanishes on~$\D_j$ and extends~$\varphi_j$. Compose a linear operator~$\Phi\colon V\otimes \mathbb{R}^\ell\to \mathbb{R}^m$ from the functionals~$\Phi_j$:
\begin{equation*}
\Phi = (\Phi_1,\Phi_2,\ldots,\Phi_m).
\end{equation*}
This operator extends~$\varphi$ and satisfies the condition
\begin{equation}\label{WeakCancellationExtended}
\forall j\in [1\,..\,m] \quad \forall a\in\mathbb{R}^\ell\quad (\Phi[D_j\otimes a])_j = 0.
\end{equation}

It suffices to prove an \emph{a priori} stronger version of~\eqref{Raita's}:
\begin{equation}\label{Raita'sExtended}
\Big\|\sum\limits_{n} m^{-n}\sum\limits_{\omega \in \AF_n}\J_\omega\Big[\Phi(\J^{-1}_\omega [f_{n+1}|_{\omega}])\Big]\Big\|_{L_{\infty}}\lesssim \|F\|_{L_1}
\end{equation}
for any~$L_1$-martingale~$F$\footnote{Note that we have formally defined the map~$\J_\omega$ as a map on~$V$, and now we apply it to an element of~$\mathbb{R}^m$; this does not cause any problem though.}. We use the fact that any~$L_1$-martingale adapted to~$\F$ has the limit~$\mathbb{R}^\ell$-valued measure~$\mu$ of bounded variation on~$\T$ (the measure is defined on the Borel~$\sigma$-algebra on~$\T$ defined by the metric~\eqref{Metric}) related to~$F$ by the formula
\begin{equation}\label{MartMeasures}
F_n = \sum\limits_{\omega \in \AF_n} \mu(\omega)\cdot m^n\chi_{\omega}.
\end{equation}
So, the inequality~\eqref{Raita'sExtended} is an estimate of a linear operator on the space of measures. It suffices to verify it for the case where~$\mu$ is a delta measure. 

Let~$j = \{j_n\}_n$ be a sequence of digits, i.e. a point in~$\T$, let~$a\in \mathbb{R}^\ell$. Consider the martingale~$F$ that represents~$a\cdot\delta_{j}$ via formula~\eqref{MartMeasures}. In this case,
\begin{equation*}
f_{n+1} = \J_{\omega_n}\Big[D_{j_{n+1}}\otimes a\Big] \cdot m^n, \quad \hbox{ where } \omega_n = \{j_1,j_2,\ldots,j_n\}.
\end{equation*}

The condition~\eqref{WeakCancellationExtended} makes the summands in the inner sum in~\eqref{Raita'sExtended} have disjoint supports. Indeed,
\begin{equation*}
\J_{\omega_n}\Big[\Phi(\J^{-1}_{\omega_n} [f_{n+1}|_{\omega_n}])\Big] = \J_{\omega_n}\Big[\Phi(D_{j_{n+1}}\otimes a)\Big].
\end{equation*}
By~\eqref{WeakCancellationExtended}, this function is zero on the atom~$\{j_1,j_2,\ldots,j_n,j_{n+1}\}$, where all the functions~$f_{k}$ with~$k > n+1$ are supported.

Therefore,~\eqref{Raita'sExtended} follows from the trivial estimate~$\|f_{n+1}\|_{L_\infty} \lesssim m^n$.

\section{Comparison with the real-variable case}\label{SComparison}
Assume now that~$[1\,..\,m]$ is equipped with the structure of an abelian group~$G$. Let~$\Gamma$ be the dual group of~$G$. We may think of~$V$ and~$W$ as of spaces of functions on~$G$ having zero means\footnote{Since we will be working with the Fourier transform, one might wish to switch to complex scalars here. This does not lead to any problems.}. Assume further that~$W$ is translation invariant with respect to the action of~$G$. In this case, there exist spaces~$W_\gamma \subset \mathbb{R}^\ell$,~$\gamma\in \Gamma$, such that
\begin{equation*}
W = \Big\{w \in V\otimes \mathbb{R}^\ell\;\Big|\, \forall \gamma \in \Gamma\setminus \{0\} \quad \hat{w}(\gamma) \in W_{\gamma}\Big\}.
\end{equation*}
As it was proved in~\cite{ASW}, the condition that~$W$ does not contain rank-one tensors~$v\otimes a$ with$v$ having $m-1$ equal coordinates may be reformulated as
\begin{equation*}
\bigcap_{\gamma \in \Gamma \setminus \{0\}}W_{\gamma} = \{0\}.
\end{equation*}
This perfectly matches Van Schaftingen's cancelling condition in~\cite{VS}.

Let also the operator~$\varphi$ be translation invariant. This means there exist functionals~$\varphi_{\gamma}$ on the spaces~$W_{\gamma}$,~$\gamma \ne 0$, such that
\begin{equation*}
\widehat{\varphi[w]}(\gamma) = \varphi_{\gamma}[\hat{w}(\gamma)],\quad \gamma \in \Gamma\setminus \{0\}, \quad w \in W.
\end{equation*}
Let us express~\eqref{WeakCancellation} in Fourier terms using the Plancherel theorem (by translation invariance, it suffices to consider the case~$j=0$ only):
\begin{equation*}
\varphi[D_0\otimes a](0) = \scalprod{\varphi[D_0\otimes a]}{\delta_0}=\sum\limits_{\Gamma} \reallywidehat{\varphi[D_0\otimes a]}(\gamma) = \sum\limits_{\Gamma\setminus \{0\}} \varphi_{\gamma}[a],\quad D_0\otimes a \in W.
\end{equation*}
So, the condition~\eqref{WeakCancellation} is equivalent to
\begin{equation*}
\sum\limits_{\Gamma\setminus \{0\}} \varphi_{\gamma}[a] = 0, \quad \forall a \in\!\!\! \bigcap_{\gamma \in \Gamma\setminus \{0\}} W_{\gamma},
\end{equation*}
which perfectly matches Raita's weak cancelling condition in~\cite{Raita}.

D. Stolyarov

\medskip

Department of Mathematics and Computer Science, St. Petersburg State University, Russia

St. Petersburg Department of Steklov Mathematical Institute, Russia

\medskip

d.m.stolyarov@spbu.ru

\end{document}